\documentclass[12pt]{amsart}
\usepackage{amscd}
\usepackage{verbatim}
\usepackage{amssymb, amsmath, amsthm, amscd,ifthen}
\usepackage{graphicx}


\usepackage{amsmath,amssymb,amscd,}
\usepackage[all]{xy}


\textwidth 14cm \textheight 22cm \headheight 0.5cm \evensidemargin
1.25cm \oddsidemargin 1.25cm

\usepackage{amssymb}

\newtheorem{thm}{Theorem}[section]
\newtheorem{prp}[thm]{Proposition}

\newtheorem{cor}[thm]{Corollary}

\theoremstyle{definition}



\title{Cross-ratios of quadrilateral linkages}

\author{Giorgi Khimshiashvili, Dirk Siersma}

\date{}

\keywords{quadrilateral linkage, robot $3$-arm, moduli space, cross-ratio}

\begin{document}
\begin{abstract}
We discuss the cross-ratio map of planar quadrilateral linkages, also
in the case when one of the links is telescopic. Most of our results are valid
for a planar quadrilateral linkage with generic lengths of the sides. In particular,
we describe the image of cross-ratio map for quadrilateral linkage and planar robot
$3$-arm. 
\end{abstract}

\maketitle \setcounter{section}{0}

\section{Introduction}

We deal with quadrilaterals in Euclidean plane $\mathbb{R}^2$ with coordinates $(x,y)$ identified
with the complex plane $\mathbb{C}$ with coordinate $z = x + \imath y$. Given such a quadrilateral $Q$
we define {\it cross-ratio of $Q$} as the cross-ratio of the four complex numbers representing its
vertices in the prescribed order. Using complex numbers in the study of polygons has a long
tradition (see, e.g., \cite{ahl}, \cite{ber}). We present several new developments concerned with
the above notion of cross-ratio of quadrilateral.

The main aim of this paper is to investigate the values of cross-ratio in certain
 families of planar quadrilaterals. Two types of such families are discussed: (1) the 1-dimensional moduli
spaces of quadrilateral linkage \cite{code} and (2) the 2-dimensional moduli spaces of planar robot arms.

In the first part of this paper we deal with quadrilateral linkages (or {\it $4$-bar mechanisms} \cite{gine}).
In spite of apparent simplicity of these objects their study is related to several deep results of algebraic
geometry and function theory, in particular, to the theory of elliptic functions and Poncelet Porism \cite{dui}.
Comprehensive results on the geometry of planar $4$-bar mechanisms are presented in \cite{gine}. Some recent
results may be found in  \cite{dui}, \cite{khi2},\cite{khsi}.

We complement results of \cite{gine} and \cite{khi2} by discussing several new aspects which emerged in course
of our study of extremal problems on moduli spaces of polygonal linkages (cf. \cite{khi1},\cite{khi2}, \cite{khpasizh},
\cite{khsi}, \cite{khsi2} ). In this context it is natural to consider polygonal linkage as a purely mathematical
object defined by a collection of positive numbers and investigate its moduli spaces \cite{code}.
In this paper we deal with quadrilateral linkages and planar moduli spaces.

Two types of quadrilateral linkages are considered: (1) {\it conventional} quadrilateral linkages with the fixed lengths
of the sides, and (2) quadrilateral linkages
with one {\it telescopic link} \cite{code}. Obviously, the latter concept is equivalent to the
so-called {\it planar robot $3$-arm} (or planar triple pendulum \cite{khsi2}). To unify and simplify terminology it
is convenient to refer to these two cases by speaking of {\it closed} and {\it open $4$-vertex linkages}.

The necessary background for our considerations is presented in Section 2. We begin with recalling the definition
and basic geometric properties of planar moduli spaces of $4$-vertex linkages (Proposition \ref{moduli4bar}).
With a planar $4$-vertex linkage $Q$ one can associate the {\it cross-ratio map} $Cr_Q$ from its planar moduli
space $M(Q)$ into the extended complex plane $\mathbb{\overline C}$ (Riemann sphere).
 
Our first main result gives a precise description of the image of cross-ratio map for a generic quadrilateral linkage
(Theorem \ref{crossclosed}). It turns out that 
cross-ratio is a stable mapping in the sense of singularity theory and that its image is an arc of a circle or a full circle, depending on the type of the  moduli space.
 This eventually enables us to obtain an analogous result for a planar robot $3$-arm
(Theorem \ref{crossopen}). Here again cross-ratio is a stable map, having only folds (and no cusps) and the image is an annulus. Moreover the Jacobian of cross-ratio is a non-zero multiple of the signed area and the critical points correspond to quadrilaterals and arms with signed area zero.

In conclusion we mention several possible generalizations of and research
perspectives suggested by our results.

{\bf Acknowledgment.} Joint research on these topics was started during the authors' visit to the Abdus Salam International Centre
for Theoretical Physics in June of 2009. The present paper was completed during a "Research in Pairs" session in CIRM (Luminy)
in January of 2013. The authors acknowledge excellent working conditions in both these institutions and useful
discussions with E.Wegert and G.Panina.

\section{Moduli spaces of planar $4$-vertex linkages}

We freely use some notions and constructions from the mathematical theory of linkages,
in particular, the concept of {\it planar moduli space} of a polygonal linkage \cite{code}.
Recall that {\it closed $n$-lateral linkage} $L(l)$ is defined by
a $n$-tuple $l$ of positive real numbers $l_j$ called its side-lengths such that the biggest of
side-lengths does not exceed the sum of remaining ones. The latter condition guarantees the existence
of a $n$-gon in Euclidean plane $\mathbb{R}^2$ with the lengths of the sides equal to numbers $l_j$.
Each such polygon is called a {\it planar realization} of linkage $L(l)$.

Linkage with a {\it telescopic side} is defined similarly but now the last side-length $l_n$
is allowed to take any positive value. For brevity we will distinguish these two cases by speaking
of closed and open linkages.

For a closed or open linkage $L$, its planar configuration space $M(L) = M_2(L)$ is defined as the set of its
planar realizations (configurations) taken modulo the group of orientation preserving isometries of $\mathbb{R}^2$ \cite{code}.
It is easy to see that moduli spaces $M(L)$ have natural structures of compact real algebraic varieties.
For an open $n$-linkage its planar moduli space is diffeomorphic to the $(n-2)$-dimensional torus $T^{n-2}$.
For a closed $n$-linkage with a {\it generic} side-length vector $l$, its planar moduli space is a smooth compact
$(n-3)$-dimensional manifold. As usual, here and below the term "generic" means "for an open dense subset of
parameter space" (in our setting, this is the space $\mathbb{R}_+^n$ of side-lengths).

In particular, a closed $4$-linkage $Q = Q(l)$ is defined by a quadruple of positive numbers $l = (a, b, c, d) \in \mathbb{R}_+^4$.
An open planar $4$-linkage (or planar robot $3$-arm $A=A(l)$) is analogously defined by a triple of positive numbers $l = (a, b, c)\in \mathbb{R}_+^3$
and its planar moduli space is diffeomorphic to the two-torus $T^2$. The complete list of possible topological types of planar moduli
spaces of closed $4$-linkages is also well known (see, e.g., \cite{kami1}).

\begin{prp} \label{moduli4bar}
The complete list of homeomorphism types of planar moduli spaces of a $4$-bar linkages is as follows:
circle, disjoint union of two circles, bouquet of two circles, two circles with two common points,
three circles with pairwise intersections equal to one point.
\end{prp}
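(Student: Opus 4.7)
The plan is to realize $M(Q)$ as a level set of a smooth function on a $2$-torus and then apply elementary Morse theory. Fix the side of length $d$ along the $x$-axis with $P_1 = (0,0)$ and $P_4 = (d,0)$; then $P_2$ ranges over a circle of radius $a$ around $P_1$, parameterized by an angle $\alpha$, and $P_3$ ranges over a circle of radius $c$ around $P_4$, parameterized by an angle $\gamma$. These parameters live on a torus $T^2$, and the constraint $|P_2 - P_3| = b$ cuts out $M(Q)$ as the level set $f^{-1}(b^2)$, where $f(\alpha,\gamma) = |P_2(\alpha) - P_3(\gamma)|^2$.

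The next step is to compute the critical locus of $f$. A short calculation shows that $\nabla f$ vanishes precisely when $P_2 - P_3$ is parallel to both $P_2 - P_1$ and to $P_3 - P_4$, i.e.\ exactly when all four vertices are collinear on the $x$-axis. There are at most four such aligned configurations, with critical values $(d + \varepsilon_1 a + \varepsilon_2 c)^2$ for $\varepsilon_1,\varepsilon_2 \in \{\pm 1\}$. Whenever $b$ avoids the four numbers $|d \pm a \pm c|$ (the precise meaning of ``generic'' in this setting), $b^2$ is a regular value of $f$, so $M(Q) = f^{-1}(b^2)$ is a smooth compact $1$-manifold embedded in $T^2$, and hence a disjoint union of circles. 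A direct sublevel-set argument together with a connectedness count then shows that in the generic case this disjoint union has either one or two components, yielding the first two entries of the list.

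For non-generic parameters, $b^2$ coincides with one or more of the critical values, and the local structure of $M(Q)$ near each corresponding alignment is governed by the Morse index of $f$ there. A Hessian computation shows that a saddle point (index $1$) produces a transverse node at which two smooth arcs cross, while critical points of index $0$ or $2$ contribute isolated singular points to the level set. The three remaining topological types---bouquet of two circles, two circles meeting in two points, and three circles meeting pairwise at a single common point---arise by prescribing how many critical points land on the level $b^2$ and with which indices, and checking that each combination is realized by an appropriate parameter vector $(a,b,c,d)$.

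The main obstacle is the case analysis on the non-generic locus $\{\pm a \pm b \pm c \pm d = 0\}$: one has to verify that exactly these three singular homeomorphism types occur, excluding more complicated singular models, and to treat the higher-codimension strata where several of the equalities $b = |d \pm a \pm c|$ hold simultaneously. I would handle this by stratifying the parameter space $\mathbb{R}_+^4$ by the vanishing of these four linear forms and, for each stratum, writing down an explicit alignment and its Hessian to read off the local picture of the level set; combining the pieces globally (using the fact that the total space is a curve on $T^2$) then yields the list.
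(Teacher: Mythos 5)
The paper offers no proof of this proposition at all: it is quoted as a known classification with a pointer to Kapovich--Millson, so your Morse-theoretic sketch is in effect supplying the argument the paper omits, and the overall plan (realize $M(Q)$ as the level set $f^{-1}(b^2)$ of $f(\alpha,\gamma)=|P_2(\alpha)-P_3(\gamma)|^2$ on $T^2$ and analyze regular and critical levels) is the standard and correct route. There is, however, a concrete error in the key computation. The identity $\partial_\alpha f = 2(P_2-P_3)\cdot \partial_\alpha P_2$ shows that $\nabla f$ vanishes not only at aligned configurations but also wherever $P_2=P_3$; this happens on up to two points of the torus (each a nondegenerate minimum of value $0$) whenever the two circles meet, i.e.\ whenever $|a-c|\le d\le a+c$. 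These extra critical points never lie on the level $f=b^2$ for $b>0$, so they create no new singular moduli spaces, but they invalidate the Morse data your ``sublevel-set argument together with a connectedness count'' implicitly relies on: in that regime $f$ has two minima, one maximum and three saddles, and counting births and merges of components no longer forces a regular level to have at most two pieces (a priori a level between saddles could have three). To recover the bound of two you need an additional input, for instance the reflection involution $(\alpha,\gamma)\mapsto(-\alpha,-\gamma)$ together with the fact that the quotient of $M(Q)$ by it is connected (it is parameterized by the length of a diagonal, which ranges over an interval), or an explicit determination of which saddles are separating.

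A second, more structural issue is that your method, carried to completion, produces a homeomorphism type absent from the list you are proving: when $b^2$ equals the (positive) global minimum or the global maximum of $f$ --- equivalently, when one side length equals the sum of the other three --- the level set is a single point. The paper itself invokes exactly this case later (Subsection 3.1, ``Long aligned: \dots the moduli space is a point''), so the proposition's list is incomplete as stated and your proof should either record the point as an additional type or explain why it is being excluded. Finally, the entire non-generic analysis --- which combinations of on-level critical points and indices actually occur, nondegeneracy of the aligned critical points in the kite, parallelogram and rhombus strata, and realizability of each singular model --- is only announced, not carried out; since that case-by-case verification is where essentially all the content of the classification lives, the proposal as written is a plan for a proof rather than a proof.
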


Closed linkages with smooth moduli spaces are called non-degenerate. It is well-known
that non-degeneracy is equivalent to the generic condition $a \pm b \pm c \pm d \ne 0$. It excludes aligned configurations. In the sequel we mainly focus on non-degenerate
quadrilateral linkages, but in our study of  robot $3$-arms we will meet also the degenerate quadrilateral linkages.
See the first row of Figure
\ref{fig:MS_CR} for pictures of the moduli spaces.

\section{Cross-ratio map of quadrilateral linkage} 

In this section we use some basic properties of cross-ratio which can be found in \cite{ber}.
Recall that the complex cross-ratio of four points (where no three of them coincide) $p, q, z, w \in \mathbb{C}$ is defined as
\begin{equation}
[p, q; z, w] = \frac{z - p}{z - q} : \frac{w - p}{w - q} = \frac{p - z}{p - w} \cdot \frac{q - z}{q - w}.
\end{equation}
and takes values in $\mathbb{C} \cup \infty = \mathbb{P}^1(\mathbb{C})$. 
Coinciding pairs correspond to the values $0,1,\infty$.\\
Group $S_4$ acts by permuting points so one can obtain up to six values of the cross-ratio
for a given unordered quadruple of points which are related by well-known relations \cite{ber}.
For further use notice also that the value of cross-ratio is real if and only if the four points lie
on the same circle of straight line \cite{ber}.

Consider now a quadrilateral linkage $Q=Q(a,b,c,d)$. Note that no three vertices can coincide. Then, for each planar configuration
$V = (v_1, v_2, v_3, v_4) \in \mathbb{C}^4$ of $Q$, put
\begin{equation}
Cr(V) = \mathrm{Cr}((v_1, v_2, v_3, v_4)) = [v_1, v_2; v_3, v_4] = \frac{v_3-v_1}{v_3-v_2} : \frac{v_4-v_1}{v_4-v_2}.
\end{equation}

This obviously defines a continuous (in the non-degenerate case actually a real-analytic) mapping
$Cr_Q: M(Q) \rightarrow \mathbb{P}^1(\mathbb{C})$. Our main aim in this section is to describe its image
$\Gamma_Q = \mathrm{Im}\,Cr_Q$ which is obviously a continuous curve in $\mathbb P ^1(\mathbb C)$.
Taking into account some well-known properties of cross-ratio and moduli space, one
immediately obtains a few geometric properties of $\Gamma_Q$.

In particular, its image should be symmetric with respect to real axis. If the linkage $Q$
does not have aligned configurations the points of intersection $\Gamma_Q$ with real axis
correspond to cyclic configurations of $Q$. It is known that $Q$ can have no more then four
distinct cyclic configurations which come into complex conjugate pairs \cite{khi1}. Hence
$\Gamma_Q$ can intersect the real axis in no more than two points. In case  $M(Q)$ has two components
then they are complex-conjugate and the image of $Cr$ is equal to the image of each component, which
implies that $\Gamma_Q$ is connected even though $M(Q)$ may have two components.

In further considerations it is technically more convenient to work with another map
$R: M(Q) \rightarrow \mathbb{P}^1(\mathbb{C}) $ defined by the formula
\begin{equation}
R(v_1, v_2, v_3, v_4) = Cr(v_1, v_3, v_2, v_4) = [v_1 , v_3; v_2, v_4] =\frac{v_2-v_1}{v_2-v_3} : \frac{v_4-v_1}{v_4-v_3}
\end{equation}

\vspace{-0.5cm}

\begin{figure}[htbp]
	\centering
		\includegraphics[width=5 cm]{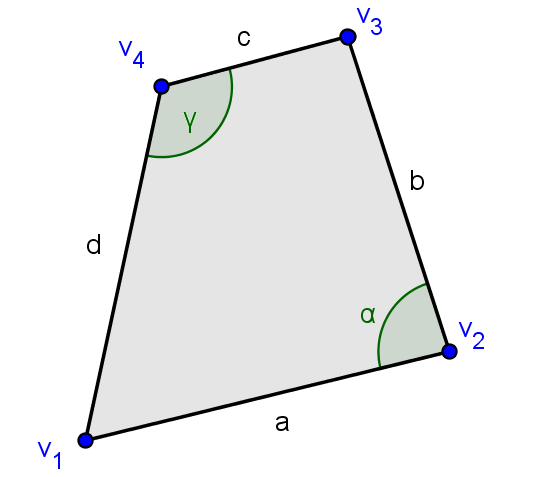}
	\caption{Quadrilateral.}
	\label{fig:Pic4gon}
\end{figure}

From the transformation properties of cross-ratio follows that $$Cr(V) = 1 - R(V).$$
So the properties of $Cr$ can be immediately derived from the properties of $R$.
For brevity we will call $R$ the {\it uniformizer} of $Q$.

The main advantage of $R$ is that, for any configuration $V$ of closed linkage $Q$,
the moduli of numbers $v_{i+1} - v_i$ are constant by its very definition.
Consequently, for any $V\in M(Q)$, one has 
\begin{equation}
\vert R(V) \vert = \frac{ac}{bd}.
\end{equation}
In other words, $R$ maps $M(Q)$ into the circle of radius $\frac{ac}{bd}$
with the center at point $0 \in \mathbb{C}$. Later (in the robot arm case) it is more convenient to consider the chart around $\infty$
and we get a circle with radius  $\frac{bd}{ac}$.
Let 
\begin{equation}
\alpha = \arg \frac{v_3-v_2}{v_1-v_2} \; \;  , \; \;  \gamma = \arg \frac{v_1-v_4}{v_3-v_4}
\end{equation} be the angles at points $v_2$ and $v_4$ in the configuration $V$.

It follows that
\begin{equation}
\arg R(V) = -(\alpha + \gamma), 
\end{equation}

These observations enable us to get a very precise description of the image $\mathrm{Im}\,R$
given in the proposition below. Notice that since a non-singular moduli space is homeomorphic
to a circle or the disjoint union of two circles, one may use the natural orientations of $M(Q)$
and $0 \in \mathbb{C}$ to define the mapping degree of uniformizer map.

\begin{thm} \label{crossclosed}
For a non-degenerate quadrilateral linkage $Q$, the following statements hold:

(1) the image $\mathrm{Im}\,R$ is a subset of the circle
of radius $ac/bd$ centered at the point $0 \in \mathbb{C}$;

(2) the image $\mathrm{Im}\,R$ is connected and symmetric about the real axis
containing the point $\frac{ac}{bd}$;

(3) $R$ is surjective if and only if
$(a+b-c-d)(a-b+c-d)(a-b-c+d) \leq 0.$

(4) the mapping degree of $R$ equals zero and multiplicity at each point does not
exceed two.
\end{thm}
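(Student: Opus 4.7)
\textbf{Parts (1) and (2)} are almost already in hand. The inclusion $\mathrm{Im}\,R\subset\{|z|=ac/bd\}$ is literally equation~(4); symmetry about the real axis follows from $R(\bar V)=\overline{R(V)}$, which is immediate from the algebraic formula for $R$; and connectedness follows from continuity of $R$ on each component of $M(Q)$ together with the remark (already recorded) that in the two-component case the two components are complex conjugate and share the same image.

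\textbf{For part (3)} I would parametrise $M(Q)$ by the diagonal length $\delta=|v_1-v_3|\in[\delta_L,\delta_R]$, where $\delta_L=\max(|a-b|,|c-d|)$ and $\delta_R=\min(a+b,c+d)$, together with two signs $\epsilon_2,\epsilon_4\in\{\pm 1\}$ indicating on which side of the line $v_1v_3$ the vertices $v_2,v_4$ sit. Writing $\alpha_0(\delta),\gamma_0(\delta)\in[0,\pi]$ for the interior angles at $v_2$ and $v_4$ in the two triangles cut off by this diagonal, a direct check gives $\alpha+\gamma\equiv\epsilon_2\alpha_0-\epsilon_4\gamma_0\pmod{2\pi}$. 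The value $-ac/bd$ is always attained: $\alpha_0+\gamma_0$ is strictly increasing in $\delta$, and a short triangle-inequality argument shows it is $<\pi$ at $\delta_L$ and $>\pi$ at $\delta_R$. So surjectivity reduces to deciding whether $\alpha_0=\gamma_0$ has a solution in $[\delta_L,\delta_R]$. The decisive identity is
\[
\cos\alpha_0(\delta)-\cos\gamma_0(\delta)=\frac{(ad-bc)(ac-bd)+\delta^2(ab-cd)}{2abcd},
\]
which is affine in $\delta^2$ and therefore has at most one zero; so $\alpha_0=\gamma_0$ is solvable iff the signs of this expression at $\delta_L$ and $\delta_R$ disagree. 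Substituting each of the four candidate boundary values $\delta\in\{|a-b|,|c-d|,a+b,c+d\}$ and using the difference-of-squares identity twice, the numerator collapses to $ab\,P_2P_3$ or $cd\,P_2P_3$ in the first two cases and to $ab\,P_1(a+b+c+d)$ or $cd\,P_1(a+b+c+d)$ in the last two, where $P_1=a+b-c-d$, $P_2=a-b+c-d$, $P_3=a-b-c+d$. Hence the sign at $\delta_L$ is that of $P_2P_3$ and the sign at $\delta_R$ is that of $P_1$, no matter which endpoint is active, and the two disagree exactly when $P_1P_2P_3\leq 0$.

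\textbf{For part (4)}, degree zero comes from a symmetry argument: in the non-degenerate case complex conjugation $c$ acts on $M(Q)$ without fixed points (fixed points would be aligned realisations), so on each component $\cong S^1$ it is the free (hence antipodal and orientation-preserving) involution; the target conjugation has degree $-1$, so the identity $R\circ c=\overline{R}$ yields $\deg R=-\deg R$ and thus $\deg R=0$. For the multiplicity bound, the critical points of $R$ live only on the branches with $\epsilon_2=\epsilon_4$, and occur exactly when $ab\sin\alpha_0=cd\sin\gamma_0$, i.e., when the two triangles $v_1v_2v_3$ and $v_1v_3v_4$ have equal unsigned area. By Heron's formula the difference $A_1^2-A_2^2$ is affine in $\delta^2$, so there is at most one such $\delta_c$, giving at most one critical point per such branch. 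At $\delta_c$ a straightforward differentiation of $G(\delta)=\alpha_0(\delta)-\gamma_0(\delta)$ shows that $G''(\delta_c)$ is a nonzero multiple of $cd\cos\gamma_0-ab\cos\alpha_0$, which can vanish simultaneously with $G'(\delta_c)=0$ only if $ab=cd$ and $\alpha_0=\gamma_0$, excluded in the non-degenerate setting. Hence the critical points are all folds and the local multiplicity is two.

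\textbf{The main obstacle} I expect is the sign bookkeeping in part (3): one has to match the four possible active endpoint values of $\delta$ (selected by the signs of $a+b-c-d$ and $|a-b|-|c-d|$) to the correct simplification of the numerator above. What makes the final criterion come out cleanly is the uniform dependence --- the sign at either candidate left endpoint is that of $P_2P_3$, and the sign at either candidate right endpoint is that of $P_1$ --- which is exactly why the product $P_1P_2P_3$ is the natural invariant.
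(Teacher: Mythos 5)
Your argument is correct in substance, and for part (3) it takes a genuinely different route from the paper. The paper fixes a target argument $\tau$, reduces $\mathrm{Arg}\,R(V)=\tau$ to a single trigonometric equation $A\sin\gamma+B\cos\gamma=C$, and reads off solvability from the sign of the discriminant $F_\tau=A^2+B^2-C^2$, which is monotone in $\tau$ on $[0,\pi]$; surjectivity is then equivalent to $F_0\ge 0$, and $F_0$ factors as $-(a+b-c-d)(a-b+c-d)(a-b-c+d)(a+b+c+d)$. You instead parametrise $M(Q)$ by the diagonal $\delta$ and reduce surjectivity to solvability of $\alpha_0=\gamma_0$, exploiting that $\cos\alpha_0-\cos\gamma_0$ is affine in $\delta^2$ and evaluating it at the four candidate endpoints; your endpoint factorisations ($abP_2P_3$, $cdP_2P_3$, $abP_1(a{+}b{+}c{+}d)$, $cdP_1(a{+}b{+}c{+}d)$) check out, and the uniformity of the endpoint signs is exactly what makes $P_1P_2P_3$ appear. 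The paper's discriminant gives the global preimage count ($0$, $1$ or $2$ solutions for every $\tau$) for free, which is how it really justifies the multiplicity claim in (4); your diagonal picture is more geometric and also identifies the critical points as the equal-area (signed area zero) configurations, which is the content of the paper's Theorem 3.2. Note that your reduction ``surjective iff $\alpha_0=\gamma_0$ is solvable'' silently uses that the image is a connected conjugation-symmetric arc already containing $-ac/bd$, hence is the full circle iff it contains $+ac/bd$; that is fine, but worth saying.

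Two soft spots in your part (4). First, the degree argument assumes complex conjugation acts on each component of $M(Q)$; in the two-component case conjugation \emph{swaps} the components (the paper records that they are complex conjugate), so ``the free involution of $S^1$'' is not available there --- the conclusion $\deg R=0$ still follows by summing the degrees of the two components, which cancel under conjugation, but the argument needs restating. Second, your fold analysis gives the \emph{local} multiplicity two at critical points; to get the global bound (every value has at most two preimages) you still need to assemble the branch counts --- e.g.\ that the two convex branches and the two crossed branches cover the image arc with total multiplicity two --- whereas the paper obtains this directly from the $\le 2$ solutions of its trigonometric equation. Both issues are repairable within your framework.
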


{\bf Proof.} The first two statements follow from the preceding discussion.
The third property can be proved as follows. Take a point $e^{\imath \tau} \in S^1$.
We wish to solve the equation $\mathrm{Arg}\,R(V) = \tau$ with $V\in M(Q)$.
Using the above notation this is equivalent to solving the system
$$\{a^2 + b^2 -2ab \cos \alpha = c^2 + d^2 - 2cd \cos \gamma \; , \; \alpha + \gamma = - \tau\}.$$
Substituting $\cos \alpha = \cos(\tau + \gamma)$  we get
$$a^2 + b^2 -2ab \cos \tau \cos \gamma + 2ab \sin \tau \sin\gamma \; - c^2 -d^2 + 2cd \cos \gamma = 0.$$
From this one easily obtains equation of the form
\begin{equation}
A \sin \gamma + B \cos \gamma = C,
\end{equation}
where $A = 2ab \sin \tau, \; B = -2ab \cos \tau + 2cd, \; C = a^2 + b^2 - c^2 - d^2.$
Now it is easy to see that this equation may have $0$, $1$ or $2$ solutions in $[0, 2\pi]$
depending on the sign of expression 
$$F_{\tau} = A^2 + B^2 - C^2 = 4 a^2 b^2 + 4 c^2 d^2- (a^2+b^2-c^2-d^2)^2 - 8abcd \cos \tau .$$ 
Namely, there are no solutions
if $F_{\tau} < 0$, one solution if $F_{\tau} = 0$, and two solutions if $F_{\tau} > 0$.

It is now easy to conclude that if solution exists for certain $\tau \in [0, \pi]$ then
it exists for any $\sigma \in [0, \pi], \sigma > \tau$ because in this case
$$F_{\sigma}(a,b,c,d) \geq F_{\tau}(a,b,c,d) \geq 0.$$
Hence surjectivity takes place if and only if the point with argument $0$ is in the image of $R$.
Notice that $$F_{0}(a,b,c,d) = -(a+b-c-d)(a-b+c-d)(a-b-c+d)(a+b+c+d).$$
Thus surjectivity is equivalent to $F_{0}(a,b,c,d) \ge 0$ which differs from the criterion
of (3) only by a negative factor $-(a+b+c+d)$. So property (3) is proved.
Property (4) follows form the symmetry of $R$ with respect to the real axis, which
completes the proof of proposition.
\qed

In the non-degenerate case we have:
\begin{cor}
The image of $Cr$ is a conjugation-invariant arc of the circle of radius $ac/bd$ centered
at the point $1 \in \mathbb{C}$.
\end{cor}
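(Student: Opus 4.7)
The plan is to derive this as an immediate consequence of Theorem \ref{crossclosed} by pushing its conclusions through the affine change of variable $\phi\colon z \mapsto 1-z$, which was already shown to intertwine $R$ and $Cr$ via the identity $Cr(V) = 1 - R(V)$ recorded just before the theorem.

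First I would observe that $\phi$ is an (orientation-reversing) isometry of $\mathbb{C}$ sending the circle $\{|z| = ac/bd\}$ centered at $0$ to the circle $\{|w-1| = ac/bd\}$ centered at $1$. Applying $\phi$ to Theorem \ref{crossclosed}(1) gives $\mathrm{Im}\,Cr = \phi(\mathrm{Im}\,R) \subset \{|w-1| = ac/bd\}$, which is precisely the claimed circle. For the arc structure, continuity of $\phi$ preserves connectedness, and by Theorem \ref{crossclosed}(2) the set $\mathrm{Im}\,R$ is a connected subset of a circle; hence so is $\mathrm{Im}\,Cr$, i.e., it is an arc (possibly degenerate, possibly the entire circle, depending on the surjectivity criterion of Theorem \ref{crossclosed}(3)).

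For conjugation-invariance I would use that $\phi$ commutes with complex conjugation, since $\overline{1-z} = 1 - \bar z$. Theorem \ref{crossclosed}(2) states that $\mathrm{Im}\,R$ is symmetric about the real axis, so
\begin{equation*}
\overline{\mathrm{Im}\,Cr} = \overline{\phi(\mathrm{Im}\,R)} = \phi(\overline{\mathrm{Im}\,R}) = \phi(\mathrm{Im}\,R) = \mathrm{Im}\,Cr,
\end{equation*}
which is the desired symmetry.

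There is no real obstacle here; the content of the corollary is a direct translation of the description of $\mathrm{Im}\,R$ under the affine map $z \mapsto 1-z$. The only point requiring a brief word is that one should interpret ``arc'' in the generalized sense (including the full circle, which may occur precisely in the surjective case of Theorem \ref{crossclosed}(3)); this matches standard usage and needs no additional argument.
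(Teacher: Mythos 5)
Your proposal is correct and follows exactly the paper's route: the paper simply notes that the corollary is ``immediate in view of the relation between $Cr$ and $R$,'' i.e., the identity $Cr(V) = 1 - R(V)$, and you have merely spelled out the details of transporting the conclusions of Theorem \ref{crossclosed} through the map $z \mapsto 1-z$. Nothing further is needed.
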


This is immediate in view of the relation between $Cr$ and $R$.

\begin{cor}
Cross-ratio map of $Q$ is surjective if and only if $Q$ has a self-intersecting cyclic configuration.
\end{cor}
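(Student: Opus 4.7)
The plan is to reduce the statement to Theorem~\ref{crossclosed}(3) by interpreting the two real points $R = \pm ac/bd$ on the target circle geometrically.

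First I observe that since $Cr_Q = 1 - R$, surjectivity of $Cr_Q$ onto its target circle (of radius $ac/bd$, centered at $1$) is equivalent to surjectivity of $R$ onto the circle of radius $ac/bd$ centered at $0$. By Theorem~\ref{crossclosed}(3) this is equivalent to the inequality $(a+b-c-d)(a-b+c-d)(a-b-c+d) \le 0$; and from the proof of that theorem, this is in turn equivalent to the attainment of $\tau = 0$, i.e., to the point $R = +ac/bd$ lying in the image.

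Next I characterize which configurations map to the two real intersections $R = \pm ac/bd$. A configuration $V$ has $R(V) \in \mathbb{R}$ if and only if its four vertices are concyclic, i.e., $V$ is a cyclic configuration of $Q$. Using the identity $\arg R(V) = -(\alpha+\gamma)$ with $\alpha, \gamma$ the signed angles at $v_2$ and $v_4$, together with the inscribed-angle theorem, I distinguish two cases. In a convex (simple) cyclic realization the labels appear in the cyclic order $v_1,v_2,v_3,v_4$ on the circumscribing circle; the points $v_2$ and $v_4$ then subtend the chord $v_1v_3$ from opposite arcs, giving the classical relation $\alpha+\gamma=\pi$, whence $R(V) = -ac/bd$. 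In either of the two self-intersecting cyclic realizations the labels appear in a non-consecutive order on the circle, the vertices $v_2$ and $v_4$ subtend $v_1v_3$ from the same arc, and a direct inscribed-angle computation yields $\alpha+\gamma \equiv 0 \pmod{2\pi}$, so $R(V) = +ac/bd$.

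Finally, Brahmagupta's construction produces a convex cyclic realization of $Q$ for any admissible side-length vector, so the value $R = -ac/bd$ is always attained. Combining with the first two steps gives the chain
$$Cr_Q \text{ surjective} \;\Longleftrightarrow\; R=+ac/bd \text{ attained} \;\Longleftrightarrow\; Q \text{ has a self-intersecting cyclic configuration},$$
which is the desired equivalence.

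The main obstacle will be the careful bookkeeping of signed angles in the self-intersecting case; conceptually this is just the inscribed-angle theorem, but the sign convention for $\alpha$ and $\gamma$ as arguments of complex ratios must be checked against each of the two non-consecutive cyclic orderings of the labels on the circumscribing circle.
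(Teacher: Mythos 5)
Your proposal is correct and follows essentially the same route as the paper: the paper's own (one-line) proof likewise reduces to the criterion from Theorem~\ref{crossclosed}(3) that surjectivity is equivalent to the argument-zero point $+ac/bd$ being attained, and then observes that this point is exactly the value of $R$ on a self-intersecting cyclic configuration. Your additional bookkeeping (real values of $R$ correspond to cyclic configurations, convex ones give $\arg R=\pi$ via the inscribed-angle theorem, and the Brahmagupta existence of a convex cyclic realization) just spells out details the paper leaves implicit.
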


This follows from the above proof since the argument of $R$ of self-intersecting
cyclic configuration is equal to $0$.

\begin{cor}
Cross-ratio map of $Q$ is surjective if and only if its planar moduli space has two components.
In other words, surjectivity of cross-ratio map is a topological property.
\end{cor}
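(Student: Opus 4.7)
The plan is to reduce the corollary to Theorem \ref{crossclosed}(3) by establishing the following topological characterization: $M(Q)$ has two components if and only if $(a+b-c-d)(a-b+c-d)(a-b-c+d) < 0$. Since $Q$ is non-degenerate, none of the factors $a\pm b\pm c\pm d$ vanishes, so the weak inequality in Theorem \ref{crossclosed}(3) is automatically strict; together with this characterization the corollary follows immediately.

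To analyse the number of components of $M(Q)$, I would parameterize by the diagonal length $D=|v_1-v_3|$. Viewing the quadrilateral as two triangles $(a,b,D)$ and $(c,d,D)$ glued along the diagonal, the admissible values of $D$ form the closed interval
\[
I = \bigl[\,\max(|a-b|,|c-d|),\ \min(a+b,\,c+d)\,\bigr].
\]
Placing $v_1=0$ and $v_3=D>0$ on the real axis fixes the orientation-preserving isometry, and at each interior $D$ the vertex $v_2$ can be chosen in either half-plane relative to the line $v_1v_3$, and independently $v_4$ in either half-plane, giving exactly four configurations in $M(Q)$. Labelling the four sheets by $(\varepsilon_2,\varepsilon_4)\in\{+,-\}^2$ exhibits $M(Q)$ as a four-sheeted cover of the interior of $I$, branched at the two endpoints where one of the two triangles degenerates to a segment.

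The core step is the combinatorics of the two branch points. If the collapsing triangle at an endpoint is $(a,b,D)$, then $v_2$ lies on line $v_1v_3$ and the branching identifies sheets according to $(+,\varepsilon_4)\sim(-,\varepsilon_4)$; if the collapsing triangle is $(c,d,D)$, the roles of $v_2$ and $v_4$ are interchanged. Tracing sheets across both endpoints shows that when the same triangle degenerates at both ends the four sheets split into two independent pairs, each closing up to a circle, so $M(Q)$ has two components; whereas when different triangles degenerate at the two ends, the four sheets are joined into a single $4$-cycle, so $M(Q)$ is a single circle.

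It remains to translate the topological dichotomy into the algebraic condition. "Same triangle degenerates at both endpoints" means either $|a-b|>|c-d|$ together with $a+b<c+d$, or $|c-d|>|a-b|$ together with $c+d<a+b$. Using the factorisation $(a-b)^2-(c-d)^2=(a-b+c-d)(a-b-c+d)$, both cases amount to saying that $(a+b-c-d)$ and $(a-b+c-d)(a-b-c+d)$ have opposite signs, i.e., the product $(a+b-c-d)(a-b+c-d)(a-b-c+d)$ is negative. Combined with Theorem \ref{crossclosed}(3), this gives the corollary. The step requiring the most care is the sheet-tracing in the "mixed" case: one must check that the identifications at the two endpoints compose to produce a single $4$-cycle on the labels $(\varepsilon_2,\varepsilon_4)$, rather than accidentally splitting; with the explicit labels in hand this is a short finite verification, but it is the only place where a genuine topological (rather than algebraic) argument is needed.
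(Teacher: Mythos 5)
Your argument is correct, but it takes a genuinely different route from the paper. The paper disposes of this corollary in one line: it chains the preceding corollary (surjectivity of $Cr_Q$ is equivalent to the existence of a self-intersecting cyclic configuration, read off from $\arg R = 0$ in the proof of Theorem \ref{crossclosed}) with a result cited from \cite{khi1}, namely that a self-intersecting cyclic configuration exists if and only if $M(Q)$ has two components. You instead bypass cyclic configurations entirely and prove from scratch that the sign condition of Theorem \ref{crossclosed}(3) is exactly the two-component criterion, by exhibiting $M(Q)$ as a four-sheeted cover of the diagonal-length interval $I=[\max(|a-b|,|c-d|),\min(a+b,c+d)]$ branched at the endpoints and tracing the sheet identifications; your endpoint combinatorics (same triangle degenerating at both ends gives two circles, different triangles give one) and the translation via $(a-b)^2-(c-d)^2=(a-b+c-d)(a-b-c+d)$ are both correct, and non-degeneracy correctly guarantees that the weak inequality in (3) is strict, that $I$ has nonempty interior, and that exactly one triangle collapses at each endpoint. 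What your approach buys is self-containedness and an independent derivation of the topological classification of $M(Q)$ (essentially the Kapovich--Millson analysis); what the paper's approach buys is brevity and the conceptual identification of the boundary of surjectivity with self-intersecting cyclic configurations. The one point worth making explicit in your write-up is the case $D=0\in I$, which would spoil the normalization $v_1=0$, $v_3=D>0$; it occurs only for $a=b$, $c=d$, which is already excluded by non-degeneracy.
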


Indeed, it was shown in \cite{khi1} that a self-intersecting cyclic configuration exists if
and only if the moduli space has two components. Notice that these observations
yield a simple criterion of connectedness of the moduli space.

\begin{cor}
The moduli space is connected if and only if $$(a+b-c-d)(a-b+c-d)(a-b-c+d) \geq 0.$$
\end{cor}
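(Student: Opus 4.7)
The plan is to derive the corollary directly from the two results that precede it: the surjectivity criterion in Theorem \ref{crossclosed}(3) and the immediately preceding corollary that identifies surjectivity of the cross-ratio map with the moduli space having two components. No new computation is needed; the argument is a chain of equivalences.

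First I would note that since we are working with a non-degenerate quadrilateral linkage, the smoothness hypothesis $a \pm b \pm c \pm d \neq 0$ ensures that $M(Q)$ is a smooth compact $1$-manifold. By Proposition \ref{moduli4bar}, the only smooth homeomorphism types that appear are the circle and the disjoint union of two circles. Thus "connected" and "having exactly one component" coincide, and the negation of connectedness is precisely "having two components."

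Next I would string the equivalences together. Since $Cr = 1 - R$, the cross-ratio map $Cr_Q$ is surjective if and only if $R$ is surjective. By Theorem \ref{crossclosed}(3), this happens if and only if
\begin{equation*}
(a+b-c-d)(a-b+c-d)(a-b-c+d) \leq 0.
\end{equation*}
By the preceding corollary (quoting \cite{khi1}), the same surjectivity is equivalent to $M(Q)$ having two components. Combining these two equivalences gives
\begin{equation*}
M(Q) \text{ is disconnected} \iff (a+b-c-d)(a-b+c-d)(a-b-c+d) \leq 0,
\end{equation*}
and negating both sides yields the stated criterion for connectedness.

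There is no real obstacle here; the only point that might seem delicate is the treatment of equality in the displayed inequality. However, the non-degeneracy assumption $a \pm b \pm c \pm d \neq 0$ excludes the vanishing of any of the three factors, so the inequality is strict in either direction and the choice between $\geq 0$ and $> 0$ is cosmetic. Thus the proof reduces to one sentence: combine Theorem \ref{crossclosed}(3) with the previous corollary and negate.
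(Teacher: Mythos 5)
Your proposal is correct and matches the paper's own (largely implicit) argument: the corollary is obtained exactly by combining the surjectivity criterion of Theorem \ref{crossclosed}(3) with the preceding corollary identifying surjectivity with the moduli space having two components, and then negating. Your extra remarks on why ``connected'' means ``one circle rather than two'' and on the harmlessness of the boundary case under non-degeneracy are consistent with the paper's setup and add nothing that conflicts with it.
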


Notice also that, in non-degenerate case, $R(Q)$ is a smooth mapping between two compact one-dimensional
manifolds. We compute its differential with respect to an angular parameter on $M_2(Q)$
and identify its critical points as quadrilaterals  $V$ with signed area equal to zero.
Signed area was defined and it properties were studied in \cite{khi1}.
Moreover we describe the global behaviour as follows:

\begin{thm} \label{crossclosed2}
For a non-degenerate quadrilateral linkage $Q$, the following statements hold:

(5) If $M(Q)$ consists of one component then cross-ratio is a stable mapping with exactly 2 fold points. The image is an arc of a circle,

(6) If $M(Q)$ consists of two components then cross-ratio is a stable mapping, has no singularities and maps each circle bijectively to the image circle.

\begin{proof}
We use Lagrange multipliers for the function  $\mathrm{arg} R(V) = - (\alpha+ \gamma)$ with respect to
\begin{equation} 
g(\alpha,\gamma) =a^2 + b^2 -2ab \cos \alpha - c^2 - d^2 + 2cd \cos \gamma = 0 .
\label{eq:implicit} 
\end{equation}
The critical points of $\mathrm{arg} R(V)$ are given by:
\begin{equation}
2ab \sin \alpha + 2cd \sin \gamma = 0 .
\label{eq:Lmult}
\end{equation}
 This is the condition that the signed Area ($sA$) of the 
quadrilateral is zero ! In \cite{khi1} it is shown, that $sA$ has exactly two critical points on each component and that in the 2-component case it never takes the value 0. It follows that in the 1-component case there are precisely 2 quadrilaterals V with $sA(V) = 0$.

Also the second derivative can  be computed by the Lagrange multipliers method, following \cite{hare}.

The main ingredient for a function $f(x_1,x_2)$ and an equation $g(x_1,x_2)=0$ is the Hessian matrix $H = (\frac{\partial^2 f}{\partial x_i \partial x_j} - \lambda  \frac{\partial^2 g}{\partial x_i \partial x_j})$, where $\lambda$ is defined by $\mathrm{ grad} f = \lambda \; \mathrm{grad} g$.
Evaluate this only on vectors in the tangent space to $g(x_1,x_2)=0 $ at a critical point.
We have in our case (taking into account condition (\ref{eq:Lmult})):
 $$\lambda^{-1} = 2ab \sin \alpha \; \;  ; \; \;
 H = \lambda
\left(\begin{array}[pos]{cc}
	2ab \cos \alpha & 0 \\
	0               & 2cd \cos \gamma \\
\end{array}\right).
$$
The tangent space is generated by $ w^t=-2ab \sin \alpha \; (1,-1)$. The final result for the second derivative is:
$w^t H w = -2 ab \sin \alpha (2ab\cos \alpha - 2cd \cos \gamma)$.
This is zero as soon as  $ 2ab \cos \alpha = 2cd \cos \gamma $. Combining this with condition (\ref{eq:Lmult}) it follows that  $(\alpha, \gamma) \in \{  (0,0), (0,\pi), (\pi,0),(\pi,\pi)\}$. These are the aligned configurations, which are degenerate.
 
So the second derivative is non-zero in the critical points of $\mathrm{arg} R(V) $. By Morse lemma this is enough to conclude that each critical point is a fold.

\end{proof} 
\end{thm}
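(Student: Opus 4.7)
My plan is to analyze the critical points of the uniformizer $R$ (hence of $Cr = 1 - R$) on $M(Q)$ and then use the global topology of $M(Q)$ to read off the image. Since $|R| \equiv ac/bd$ on $M(Q)$, only $\arg R = -(\alpha + \gamma)$ varies, and the problem reduces to a one-variable angular analysis on the implicit curve $g(\alpha,\gamma) = a^2+b^2-2ab\cos\alpha - c^2 - d^2 + 2cd\cos\gamma = 0$, where $(\alpha,\gamma)$ are the angles at $v_2$ and $v_4$ and $g=0$ expresses consistency of the two formulas for the squared diagonal $|v_1 v_3|^2$.

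I would first locate the critical points by Lagrange multipliers: $\nabla(\alpha+\gamma) = \lambda \nabla g$ gives $2ab\sin\alpha \cdot \lambda = 1 = -2cd\sin\gamma \cdot \lambda$, equivalent to $ab\sin\alpha + cd\sin\gamma = 0$, which is the signed-area equation. Invoking \cite{khi1} (signed area has exactly two zeros on a one-component moduli space and none on a two-component one) settles the critical-point count in (5) and the singularity-free claim of (6). Next, to check the Morse/fold nature, I would compute the bordered Hessian along a tangent vector to $g=0$ at a critical point; a short computation produces a scalar of the form $-2ab\sin\alpha\,(2ab\cos\alpha - 2cd\cos\gamma)$. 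Combining its vanishing with the signed-area equation forces $(\alpha,\gamma) \in \{(0,0),(0,\pi),(\pi,0),(\pi,\pi)\}$, i.e., aligned configurations, excluded by the non-degeneracy hypothesis $a \pm b \pm c \pm d \ne 0$. The Morse lemma then identifies each critical point as a fold, and folds are stable singularities of maps between $1$-manifolds.

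Finally, for the image description: in case (5), $\arg R$ is a smooth function on a circle with exactly two Morse critical points, so it takes each intermediate value twice and $\mathrm{Im}\,R$ is an arc of $\{|z|=ac/bd\}$ bounded by the two fold values. In case (6), each component of $M(Q)$ maps to the image circle as a critical-point-free map $S^1 \to S^1$, hence a covering; counting preimages via the sign of $F_\tau$ from Theorem \ref{crossclosed} together with the conjugation symmetry (which pairs preimages one-to-one between the two conjugate components) yields degree $\pm 1$ for each restriction, i.e., each restriction is a diffeomorphism. The main obstacle I anticipate is the bordered-Hessian step: one must set up the constraint geometry carefully, reduce the $2\times 2$ Hessian to its value on the tangent direction, and then rule out the four borderline aligned cases by invoking non-degeneracy; the degree-$\pm 1$ argument in (6) is the next most delicate point and hinges on using the preimage count of Theorem \ref{crossclosed} component-by-component.
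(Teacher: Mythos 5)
Your proposal is correct and follows essentially the same route as the paper: Lagrange multipliers for $\arg R=-(\alpha+\gamma)$ on the constraint curve $g=0$, identification of the critical points with signed-area-zero configurations counted via \cite{khi1}, the same second-derivative computation $-2ab\sin\alpha\,(2ab\cos\alpha-2cd\cos\gamma)$ ruling out degeneracy by excluding aligned configurations, and the Morse lemma to conclude folds. Your added detail on the degree-$\pm1$ covering argument in case (6) is a reasonable elaboration of what the paper leaves implicit.
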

\begin{figure}[h]
	\centering
		\includegraphics[width=15cm]{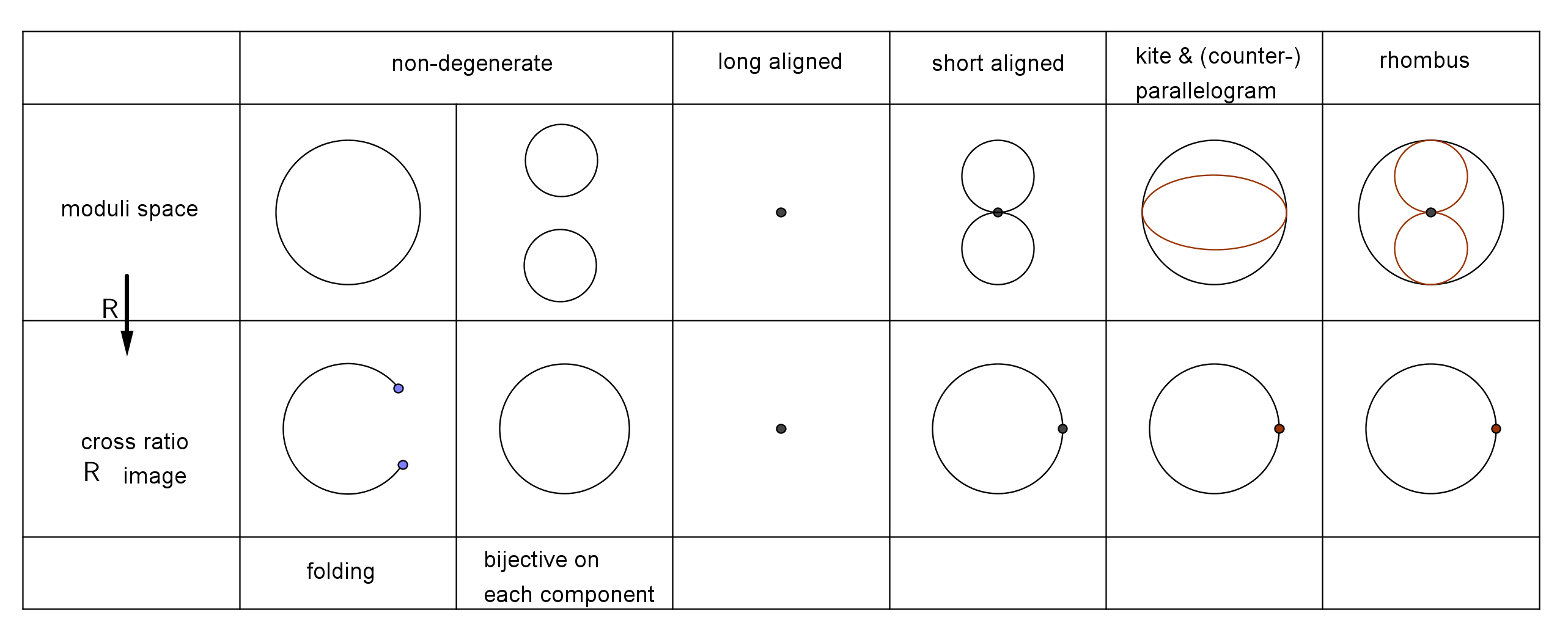}
	\caption{Moduli spaces of quadrilateral linkages and the cross-ratio images}
	\label{fig:MS_CR}
\end{figure}

Next we give the description of the image of $R$ if linkage $Q(a,b,c,d)$ is not generic.
We will use this in the section about robot arms.

\subsection{Long aligned} In this case the length of one edge is equal to the sum of the three others. We have
$$F_{\pi} = (a-b-c-d)(a+b+c-d)(a-b+c+d)(a-c+b+d) = 0.$$
$\tau=0$ is the only possibility, 
the moduli space is a point and the image is also one point.

\subsection{Short aligned} In this case the sum of the lengths of two sides is equal to the sum of the two others.
We have
$$F_{0} = - (a+b-c-d)(a-b+c-d)(a-b-c+d)(a+c+b+d) = 0.$$ Consequently $R$ is surjective.
 When when we are not in the cases \ref{ss:kite}, \ref{ss:pgm} or \ref{ss:romboid} the moduli space  is a bouquet of two circles and the uniformizer $R$ maps each of the two circles onto a full circle. The wedge point is mapped to the intersection of the circle with the positive real axis.

\subsection{Kite} \label{ss:kite}
When  $a=b$ and $c=d$ we have a moduli space, which consists of two circles having two points in common. $R$ maps
one circle $2:1$ (with degree $0$) onto the image circle and the other circle collapses to the point on the positive real axis.

\subsection{Parallelogram and counter-parallelogram}\label{ss:pgm}
When  $a=c$ and $b=d$ we have a moduli space, which consists of two circles having two points in common. $R$ maps
one circle (corresponding to the parallelograms) $2:1$ (with degree $0$) onto the image circle and the other circle (corresponding to the counter-parallelograms) collapses to the point on the positive real axis.

\subsection{Rhomboid}\label{ss:romboid}
When $a=b=c=d$  the moduli space consists of three circles having pairwise a point in common  \cite{code}. Note that $R$ maps
one circle $2:1$ (with degree $0$) onto the image circle and the other two circles collapse to the point on the positive real axis.

\section{Cross-ratio map of robot 3-arm}

The cross-ratio is defined as a map $Cr: M(A) \rightarrow \mathbb{P}^1(\mathbb{C)}$ to the Riemann sphere. 
Only if $M(A(l))$ does contain configurations with coinciding vertices  $Cr$ attains the value $\infty$. This happens besides non-generic cases only if the arm forms a triangle.
Since $M(A)$ is diffeomorphic to $T^2$ we may ask a number of natural questions about the behavior of $Cr$ as a mapping  between 2-dimensional manifolds.
In particular, in the spirit of Whitney's results on stable mappings
 (see, e.g. \cite{ber}) one can wonder if $Cr$ is
stable in the sense of singularity theory: having only folds and cusps as singularities.

As before we work below with the uniformizer $R(Z) = 1 - Cr(Z).$

\begin{thm}
The cross-ratio map for open linkages is a stable mapping with folds only.
\end{thm}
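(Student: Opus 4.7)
The plan is to exploit the telescopic-link slicing of $M(A)\cong T^{2}$ by the closing distance $d(V)=|v_{4}-v_{1}|$ and to reduce the analysis fiberwise to the closed quadrilateral case of Theorem~\ref{crossclosed2}. On the open set of $M(A)$ where $d$ is a regular value, I introduce local coordinates $(d,\theta)$ in which $\theta$ parameterizes the slice and the slice is identified with (a component of) the moduli space $M(Q(a,b,c,d))$ of the associated closed linkage. The target $\mathbb{P}^{1}(\mathbb{C})$ will be viewed in the standard polar chart around $0$ (or around $\infty$, as appropriate for the arm).

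The first step is to exploit that $|R(V)|=ac/(bd)$ depends only on $d$ and is strictly monotone in $d$. In polar coordinates $(\rho,\phi)$ on the target, the radial component of $R$ is therefore a function of $d$ alone, and the Jacobian factors as
$$J_{R}\;=\;\frac{d\rho}{dd}\cdot\frac{\partial\phi}{\partial\theta}.$$
The first factor is everywhere nonzero, and by the Lagrange-multiplier identity (\ref{eq:Lmult}) the second factor vanishes exactly on the signed-area locus. Hence the critical set of $R$ equals $\Sigma:=\{V\in M(A):sA(V)=0\}$, confirming the statement (quoted in the introduction) that the Jacobian of the cross-ratio map is a nonzero multiple of the signed area.

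The second step is to upgrade the fiberwise fold argument of Theorem~\ref{crossclosed2} to the two-dimensional setting. The nondegeneracy of the fiber critical points of $\arg R$ proved there is equivalent to $\partial(sA)/\partial\theta\neq0$ along $\Sigma$, so $0$ is a regular value of $sA$; consequently $\Sigma$ is a smooth $1$-submanifold of $M(A)$ and is transverse to $\ker DR=\partial_{\theta}$. Since $DR$ carries $T\Sigma$ to a nonzero line (because the $d$-component of any tangent vector to $\Sigma$ is mapped onto the nonzero $d\rho/dd$ direction), the restriction $R|_{\Sigma}$ is an immersion. Together these give Whitney's fold normal form at every critical point, and since $\ker DR$ is nowhere tangent to $\Sigma$, no cusps can arise. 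By the Whitney stability theorem for smooth maps between compact $2$-manifolds, having only folds as singularities implies stability of $R$, and therefore also of $Cr=1-R$.

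The main obstacle will be handling the aligned configurations (four collinear vertices) and those values of $d$ for which the slice $M(Q(a,b,c,d))$ itself degenerates into one of the singular moduli spaces described at the end of section~3. At such points the coordinate $\theta$ is not well-defined and the fiberwise nondegeneracy argument from Theorem~\ref{crossclosed2} does not directly apply; one must instead verify in two variables that $DR$ still has rank one with kernel transverse to $\{sA=0\}$, and that $\Sigma$ remains a smooth curve through these points. For generic link lengths $(a,b,c)$ these bad configurations form a finite subset of $T^{2}$, so this amounts to a case-by-case computation at isolated points, after which the local fold structure extends across the degenerate slices and the global conclusion follows.
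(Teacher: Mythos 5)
Your overall strategy coincides with the paper's: slice $M(A)\cong T^{2}$ by the length $t=|v_{4}-v_{1}|$ of the closing side, run the fold analysis of Theorem \ref{crossclosed2} fiberwise in coordinates $(t,\theta)$, identify the critical set of $R$ with $\{sA=0\}$ via the polar factorization of the Jacobian, and phrase the fold condition as transversality of $\ker DR$ to that curve. Away from the exceptional configurations this is correct, and it is essentially the paper's first step; your transversality formulation of the fold criterion is a clean equivalent of the normal-form criterion from \cite{bro} that the paper invokes.

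The genuine gap is in your last paragraph. You correctly identify the exceptional set --- the four aligned configurations $\phi,\eta\in\{0,\pi\}$ and, when $a<b+c$, the two closed (triangle) configurations with $t=0$ --- as a finite subset of $T^{2}$, but you then assert that ``the local fold structure extends across the degenerate slices,'' which is not an argument: cusps are precisely \emph{isolated} degenerations sitting on an otherwise clean fold curve, so verifying folds on the complement of finitely many points of the critical locus says nothing about those points, and those points are the only places where the ``no cusps'' claim could fail. This is exactly where the paper does explicit work. At an aligned configuration it writes the vertices as $0,\,a,\,a+be^{i\phi},\,a+be^{i\phi}+ce^{i\eta}$, computes the $2$-jet of $R^{-1}$, checks that the singular set is the line $\phi=0$ and that a coordinate change brings the $2$-jet to the fold normal form; since fold singularities are determined by their $2$-jet, this settles those four points. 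At $t=0$, where your polar-coordinate factorization of the Jacobian breaks down entirely (and where your prescription ``verify $DR$ has rank one with kernel transverse to $\{sA=0\}$'' does not even apply), the paper computes the Jacobian of $R^{-1}$ directly (Proposition \ref{prp:jac}) and finds it equals $bc\sin(\eta-\phi)\neq 0$ for a non-aligned arm, so the triangle configurations are regular points and need no fold analysis at all. Without these two computations, or substitutes for them, the theorem is unproved precisely at its critical cases. A minor further caveat, shared with the paper: Whitney stability for maps of surfaces also requires the images of the fold curves to be in general position, not merely that every local singularity be a fold.
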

\begin{proof} 
We show this in several steps. First we
consider the $4$-bar linkage $Q_t$ obtained by adding to the arm $Z$ a fourth side of length $t$ and take this $t$ as one of the local coordinates on open subsets of the torus $M(A)$. This is possible as long as $Q_t$ is non-degenerate. Avoid $a \pm b \pm c \pm t = 0,$ where  aligned cases occur.  We can take as other local coordinate $\alpha$ or $\gamma$, which are implicitly related by:
$$ a^2 + b^2 - 2ab \cos \alpha = c^2 + t^2 - 2 c t \cos \gamma.$$ 
Assume we can use $(t,\alpha)$ as local coordinates then $\gamma = \gamma (t, \alpha).$
 We use polar coordinates $(|R^{-1}|,\arg R^{-1})$ on the chart at $\infty$ . Now $$|R^{-1}(Z)| = \frac{b t}{ac} \;,  \;  \arg R^{-1} = \alpha + \gamma$$ and therefore
 the critical points of $R$ are just the union of the critical (=fold) points of each of the closed linkages. 
We next relate this to the criterium for a mapping $F(t,\alpha) = (t, f(t,\alpha))$
to have a fold singularity (cf.,\cite{bro}, p. 74): $\frac{\partial f}{\partial \alpha} = 0 $ and $\frac{\partial^2 f}{\partial^2 \alpha}  \ne 0$ both taken in a point $(t_0,\alpha_0)$. Take for this point the fold point of the quadrilateral $Q_{t_0}$ and it follows that we have indeed a fold for our open linkage. We treat the remaining cases in step 2 and 3.

The second step is to consider the aligned positions.
We choose a complex coordinate on the torus, such that the vertices of the arm are given by $0, a, a+ b e ^{i \phi}, a+ b e^{i \phi}+c e^{i \eta}$. 
This gives
$$ R^{-1} = \tfrac{-b}{ac}(a + b e^{i \phi}+ c e^{i \eta}) e^{i (\phi- \eta)}.$$
For each of the aligned positions we can compute its 2-jet. We take as example
$(\phi,\eta)=(0,0)$. The other cases behave in the same way.
Up to a constant we have for the 2-jet:
$$(\phi,\eta) \rightarrow (a+b+c - \tfrac{1}{2} [a (\phi - \eta)^2 + b (2\phi - \eta)^2 + c \phi^2] \; , \; a(\phi - \eta) + b(2 \phi - \eta) + c \phi) $$
It's singular set is the line $\phi = 0$ and a coordinate transformation brings it in the standard equation of the fold.
Since fold singularities are determined by its 2-jet it follows that also $R^{-1} $ has a fold at the point $(0,0)$.

The third step: In the special case of a closed telescopic arm ($t=0$) (where polar coordinates are not well-defined) we compute in proposition \ref{prp:jac}  the Jacobian in general and show that for $t=0$ it equals to $bc \sin[\eta-\phi]$. This expression is non-zero as soon as the arm is not aligned (non-generic case).

This finishes the proof.

\end{proof}

{\bf Remark:} Note that the other type of stable singularity, the cusp, does not occur!\\

\begin{prp} \label{prp:jac}
The Jacobian of $R^{-1}$ is equal to the signed area of the quadrangle defined by the arm $Z$. The critical points of $R$ and of cross-ratio correspond to the arms with signed area equal to zero.
\begin{proof}
A straightforward computation shows, that the Jacobian is (modulo a non-zero constant) given by:
$$ a b \sin[\phi] + a c \sin[\eta] + b c \sin [\eta - \phi].$$ 
This is precisely twice the signed area of arm $Z$.
\end{proof}
\end{prp}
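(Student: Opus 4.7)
The plan is to compute the Jacobian of $R^{-1}$ directly in the coordinates $(\phi,\eta)$ adopted in the preceding theorem, and then recognise the result, via the shoelace formula, as a non-zero multiple of the signed area of the quadrangle with vertices $v_1=0,\; v_2=a,\; v_3=a+be^{i\phi},\; v_4=a+be^{i\phi}+ce^{i\eta}$.

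First I would expand
\begin{equation*}
R^{-1}(\phi,\eta) \;=\; \frac{-b}{ac}\,\bigl(a+be^{i\phi}+ce^{i\eta}\bigr)e^{i(\phi-\eta)}
\;=\; \frac{-b}{ac}\bigl[\,a\,e^{i(\phi-\eta)}+b\,e^{i(2\phi-\eta)}+c\,e^{i\phi}\,\bigr],
\end{equation*}
and then differentiate termwise to obtain $\partial_\phi R^{-1}$ and $\partial_\eta R^{-1}$ as short trigonometric polynomials in $\phi,\eta$. For a smooth map $f\colon\RR^2\to\CC\cong\RR^2$, the Jacobian determinant equals $\mathrm{Im}\bigl(\overline{\partial_\phi f}\cdot\partial_\eta f\bigr)$, so the computation reduces to multiplying out two short complex expressions, collecting the six resulting exponentials, and extracting the imaginary part. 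Routine simplification should yield
\begin{equation*}
\mathrm{Jac}(R^{-1}) \;=\; \frac{b^{2}}{a^{2}c^{2}}\,\bigl[\,ab\sin\phi+ac\sin\eta+bc\sin(\eta-\phi)\,\bigr].
\end{equation*}

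Next I would compute the signed area of the quadrangle using the shoelace formula $\mathrm{sA}=\tfrac12\,\mathrm{Im}(\bar v_1v_2+\bar v_2v_3+\bar v_3v_4+\bar v_4v_1)$ with the four vertices listed above. Three of the six dot products are zero or real; the remaining ones give exactly the three terms $ab\sin\phi$, $ac\sin\eta$ and $bc\sin(\eta-\phi)$, so $\mathrm{sA}=\tfrac12\bigl[ab\sin\phi+ac\sin\eta+bc\sin(\eta-\phi)\bigr]$. Comparing with the previous display shows $\mathrm{Jac}(R^{-1})=\tfrac{2b^{2}}{a^{2}c^{2}}\,\mathrm{sA}$, which is the claim up to the non-vanishing constant factor anticipated in the statement. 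Since $\tfrac{2b^2}{a^2c^2}\neq 0$, the critical set of $R^{-1}$ is precisely $\{\mathrm{sA}=0\}$; because $R^{-1}$ and $R$ are related by inversion on the Riemann sphere and $Cr=1-R$ is an affine change of target, both $R$ and $Cr$ have the same critical points, proving the second assertion.

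There is no genuine obstacle: the only thing to watch for is bookkeeping in Step~1, where the six exponentials $e^{\pm i\phi}, e^{\pm i\eta}, e^{\pm i(\phi-\eta)}$ must be collected correctly so that the constant real terms cancel and one is left with exactly the three sine contributions matching the shoelace expansion. The pleasant feature worth emphasising in the write-up is the coincidence between the vanishing of the Jacobian and the signed-area equation~(\ref{eq:Lmult}) derived in the proof of Theorem~\ref{crossclosed2}, which makes the open-arm result a genuine two-dimensional analogue of the one-dimensional one.
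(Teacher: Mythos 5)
Your proposal is correct and is essentially the computation the paper itself invokes: the paper's proof consists of the single assertion that "a straightforward computation" yields $ab\sin\phi+ac\sin\eta+bc\sin(\eta-\phi)$ up to a non-zero constant, which is twice the signed area, and you have simply carried out that computation explicitly (the identity $\mathrm{Jac}=\mathrm{Im}(\overline{\partial_\phi f}\,\partial_\eta f)$, the constant $b^2/(a^2c^2)$, and the shoelace comparison all check out). No substantive difference in approach; yours just supplies the details the paper omits.
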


\begin{cor}
The point $\infty \in \mathbb{P}^1( \mathbb{C})$ is a regular value of uniformizer $R$.
\end{cor}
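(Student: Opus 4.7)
The plan is to use the local chart around $\infty$ provided by $R^{-1}$ and invoke Proposition \ref{prp:jac}. To say that $\infty$ is a regular value of $R$ means, in this chart, that $0$ is a regular value of $R^{-1}$, i.e., the differential of $R^{-1}$ has full rank at every point of the fiber $(R^{-1})^{-1}(0)$. So first I would identify this fiber, then show the Jacobian is nonzero there.

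For the first step, I would use the explicit formula
$$R^{-1} = \tfrac{-b}{ac}\bigl(a + b\,e^{i\phi} + c\,e^{i\eta}\bigr)\,e^{i(\phi-\eta)}$$
from the proof of the preceding theorem. The factor $e^{i(\phi-\eta)}$ never vanishes, so $R^{-1} = 0$ precisely when $a + b\,e^{i\phi} + c\,e^{i\eta} = 0$, i.e.\ when the tip $v_4$ of the arm coincides with the base $v_1 = 0$. Geometrically, these configurations are the closed triangles with side-lengths $a, b, c$ (and there are exactly two of them, conjugate to each other, provided the strict triangle inequalities $|a-b| < c < a+b$ hold; if the strict triangle inequality fails the fiber is empty and the statement is trivial).

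For the second step I would apply Proposition \ref{prp:jac}: the Jacobian of $R^{-1}$ equals, up to a nonzero constant, the signed area of the quadrangle determined by $Z$, namely
$$ab\sin\phi + ac\sin\eta + bc\sin(\eta-\phi).$$
At a configuration with $v_4 = v_1$, the quadrangle degenerates into the closed triangle on $v_1, v_2, v_3$, so its signed area coincides (up to sign) with the Euclidean area of a triangle with side-lengths $a, b, c$. By Heron's formula this area equals $\sqrt{s(s-a)(s-b)(s-c)}$ with $s = (a+b+c)/2$, which is strictly positive exactly when the strict triangle inequalities hold.

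Combining the two steps, at every point where $R^{-1} = 0$ the Jacobian of $R^{-1}$ is nonzero, hence $R^{-1}$ is a local diffeomorphism there, and $\infty$ is a regular value of $R$. I do not expect any serious obstacle: once Proposition \ref{prp:jac} is in hand the only real content is the observation that a closed triangle of sides $a,b,c$ satisfying the strict triangle inequality has positive Euclidean area, which is Heron. The only point requiring a touch of care is the degenerate boundary case $|a\pm b\pm c|=0$, which is handled by noting that the fiber over $\infty$ is then empty.
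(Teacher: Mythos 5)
Your argument is correct and is essentially the paper's own: the corollary is meant to follow from Proposition \ref{prp:jac} together with step 3 of the preceding proof, where the fiber over $\infty$ is identified with the closed (triangle) configurations and the Jacobian there is computed to be $bc\sin(\eta-\phi)$, nonzero for a nondegenerate triangle --- your Heron's-formula phrasing of that last point is an equivalent reformulation. One minor slip: when $c=a+b$ (or another such equality) the fiber over $\infty$ is not empty but consists of the aligned degenerate triangle, where the Jacobian does vanish; this case is harmless only because it is excluded by the paper's standing genericity assumption $a\pm b\pm c\neq 0$, not because the fiber is empty.
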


Next we investigate the shape of the image. We slice with circles.
Fix a number $t \in [0, a+b+c]$
and consider the $4$-bar linkage $Q_t$ obtained by adding to $A$ a fourth side of length $t$. Then the image of $Cr_A$ is simply the union of the the images
$Cr(M(Q_t))$. These are arcs of the circles described in Theorem \ref{crossclosed2}. \\

\begin{figure}
\centering
\begin{minipage}{.5\textwidth}
  \centering
  \includegraphics[width=.8\linewidth]{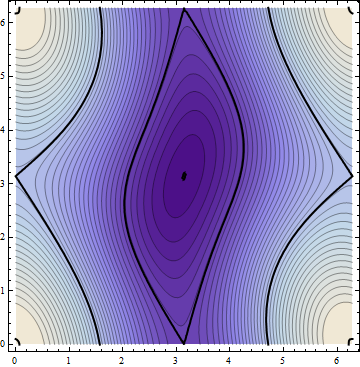}
  \caption{ $t$-levels; \newline case $a > b+c$}
	\label{fig:SpreadLevel1}
\end{minipage}%
\begin{minipage}{.5\textwidth}
  \centering
  \includegraphics[width=.8\linewidth]{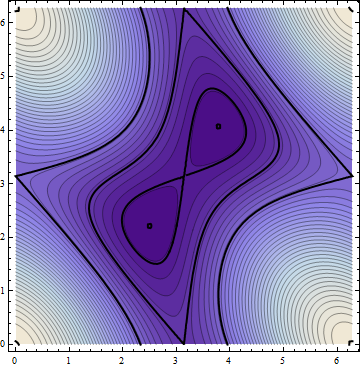}
  \caption{ $t$-levels; \newline case $a < b+c$}
  \label{fig:SpreadLevel2}
\end{minipage}
\end{figure}

We consider the following two cases:
\begin{itemize}
\item[i.] $M(A)$ contains no closed configurations,
\item[ii.]$M(A)$ contains a closed configuration (triangle).
\end{itemize}
We exclude non-generic arms. For sake of presentation we assume $a > b > c$. Other cases behave similar.
So we distinguish now between:
\begin{itemize}
\item[i.] $a > b + c $
\item[ii.] $a < b+c $
\end{itemize} 
The topology of a slice changes at critical values of $t$ (seen as function $M(A) \to \mathbb{R}$).  Level curves are shown in Figures \ref{fig:SpreadLevel1} and \ref{fig:SpreadLevel2}. According to \cite{kami1} these are exactly the aligned positions (where Morse indices follow from the combinatorics) and in the second case also the value $0$.

\begin{figure}[h]
	\centering
		\includegraphics[width=0.85\textwidth]{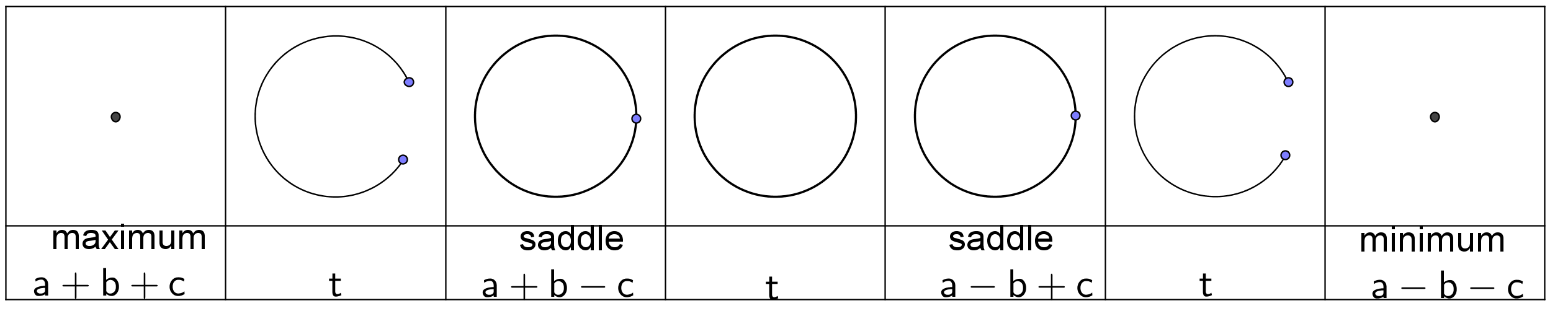}
	\caption{Movie of images of $R$ in case $a > b+c$}
	\label{fig:movie1}
\end{figure}

\begin{figure}[h]
	\centering
		\includegraphics[width=1.1\textwidth]{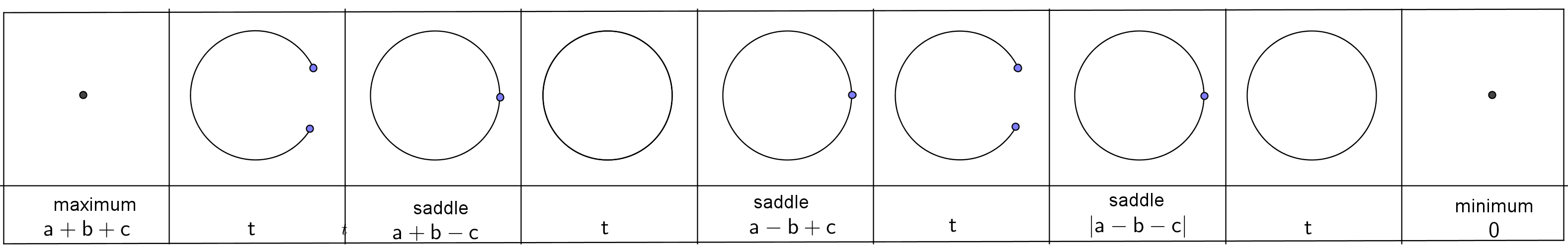}
	\caption{Movie of images of $R$ in case $a < b+c$}
	\label{fig:movie2}
\end{figure}

In case i. we have a Morse function with one maximum, two saddles and a minimum; see the ``movie'' of R-images in figure \ref{fig:movie1}. 
In case ii. (see figure \ref{fig:movie2}) there appears an extra saddle and we end up with two minima, which correspond to $t=0$ (two conjugate triangles). Pictures of the image of $R$ are shown in figures \ref{fig:CRb5c2} and \ref{fig:CRb7c6}.\\

The considerations from section 3 give the following analog of Theorems \ref{crossclosed} and \ref{crossclosed2}:

\begin{thm} \label{crossopen}
For a generic  planar robot $3$-arm  $A(a,b,c)$, the cross-ratio map has degree zero, its image is a conjugation-invariant differentiable annulus and belongs to a disc with radius $a+b+c$.
The cross-ratio map is 2-1 except on the critical set, with image the fold curves.
\end{thm}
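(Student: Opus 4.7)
The strategy is to reduce everything to the closed case already handled in Theorems \ref{crossclosed} and \ref{crossclosed2} by foliating the torus $M(A)$ by level sets of the closing-length function $t(V) = |v_4 - v_1|$, which takes values in an interval $[t_{\min},\, a+b+c]$ with $t_{\min} = \max(0,\, a-b-c,\, b-a-c,\, c-a-b)$. For each non-degenerate $t$ in the interior of this range, the level set $\{t(V)=t\} \subset M(A)$ is canonically identified with $M(Q_t)$, the moduli space of the closed quadrilateral $Q_t = Q(a,b,c,t)$, and the computation preceding Theorem \ref{crossclosed} gives $|R(V)| = ac/(bt)$ along this slice. Hence each slice is sent into the circle $C_t$ of radius $ac/(bt)$ centered at the origin, and the total image is the union $\bigcup_t R(M(Q_t))$ of these nested concentric arcs.

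Applying Theorem \ref{crossclosed2} slicewise, $R(M(Q_t))$ is either a symmetric arc of $C_t$ (one-component case) or all of $C_t$ (two-component case). As $t$ varies, these arcs/circles sweep out a $2$-dimensional region that is symmetric under complex conjugation (inherited from the analogous symmetries of $M(A)$ and of cross-ratio). Its boundary is traced, by Proposition \ref{prp:jac}, by the zero-signed-area configurations: on a slice where $M(Q_t)$ is connected, Theorem \ref{crossclosed2} furnishes exactly two fold points, which are the endpoints of the arc and so lie on the boundary of the sweep. Smoothness of the two resulting boundary curves follows from the stability statement of the preceding theorem together with smooth dependence of the fold points on $t$. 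The stated containment in a disc of radius $a+b+c$ comes from the bounds on $t$, which translate directly into bounds on $|R|$ and, via $Cr = 1 - R$, on $|Cr|$.

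With this picture in hand, the mapping-theoretic claims are essentially automatic. For any $w$ in the interior of the image, $|w|$ determines $t = ac/(b|w|)$ uniquely, so $w$ lies on a single circle $C_t$, and Theorem \ref{crossclosed2} then supplies exactly two preimages on $M(Q_t)$; this gives the generic $2$-to-$1$ behavior, with the fold curves being precisely the image of the zero-signed-area locus. Since the image is a proper subset of $\mathbb{P}^1(\mathbb{C})$, the degree of $R$ (and hence of $Cr$) is zero.

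The main obstacle I expect is the behavior at the finitely many exceptional values of $t$ at which $Q_t$ becomes degenerate (long/short aligned, kite, parallelogram, rhomboid, as enumerated at the end of Section 3) and, in the case $a < b+c$, at the endpoint $t=0$ where the arm closes into a triangle and $R$ blows up to $\infty$. One must check that none of these produces extra boundary components, nonsmooth corners, or pieces of image beyond the two fold curves already identified, so that the two boundary curves genuinely close up into smooth loops bounding an annulus. The ingredients for this verification are the classification of degenerate cases in Section 3, the list of topological types in Proposition \ref{moduli4bar}, and the Morse-theoretic analysis of the function $t : M(A) \to \mathbb{R}$ already sketched via Figures \ref{fig:SpreadLevel1}--\ref{fig:movie2}.
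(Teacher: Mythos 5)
Your proposal follows the paper's own argument essentially verbatim: the paper likewise foliates $M(A)$ by the closing length $t$, identifies each level set with $M(Q_t)$, applies Theorems \ref{crossclosed} and \ref{crossclosed2} and Proposition \ref{prp:jac} slicewise, and assembles the annulus and the $2$-to-$1$ statement from the Morse theory of $t$ and the fold analysis, with the degenerate slices and the $t=0$ triangle case handled separately exactly as you anticipate. The approach and the identified difficulties coincide with the paper's.
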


\begin{figure}[h]
\centering
\begin{minipage}{.5\textwidth}
  \centering
  \includegraphics[width=.8\linewidth]{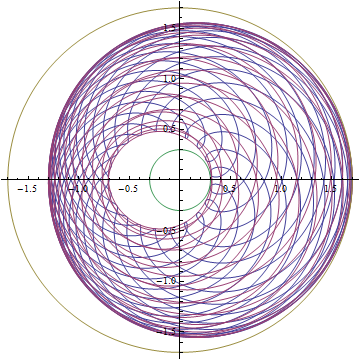}
  \caption{Image of $R$; case $a > b+c$}
	\label{fig:CRb5c2}
\end{minipage}%
\begin{minipage}{.5\textwidth}
  \centering
  \includegraphics[width=.8\linewidth]{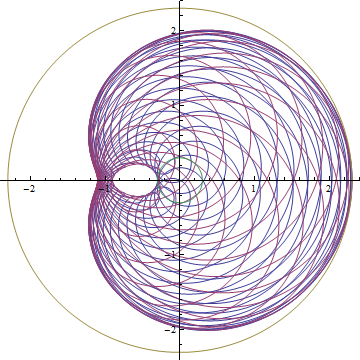}
  \caption{Image of $R$; case $a < b+c$}
  \label{fig:CRb7c6}
\end{minipage}
\end{figure}


\section{Concluding remarks}

First of all, we wish to add that using stereographic projection
one may introduce cross-ratio map for spherical quadrilaterals.
The analogs of Theorems \ref{crossclosed} and \ref{crossopen} follow
in a straightforward way.

It is also interesting to describe the change of cross-ratio
under the action of the so-called Darboux transformation of quadrilateral
linkage \cite{dui}. Taking into account a version of Poncelet Porism for
quadrilateral linkages obtained in \cite{dui} one might hope to get certain
insights concerning the arising discrete dynamical system in the image 
of cross-ratio map. 

In a future paper, by a way of analogy we investigate cross-ratios of one-dimensional
families of the so-called {\it poristic} quadrilaterals arising from Poncelet Porism \cite{fla}.
Analogs of our main results are available for bicentric poristic quadrilaterals 
and poristic quadrilaterals associated with confocal ellipses.

Next, one can also consider cross-ratios of families of quadrilaterals
arising as the centers of circles of Steiner $4$-chains \cite{ber} and
try to describe the image of the corresponding cross-ratio map.

Finally, an analogous line of development arises in connection
with the notion of {\it conformal modulus of a quadrilateral} \cite{ahl}.
In particular, one can try to describe the image and behavior of
conformal modulus for families of poristic bicentric polygons
and confocal ellipses. Developments in this direction will be published
elsewhere.

\medskip

Ilia State University, Tbilisi, Georgia

\smallskip

Utrecht University, The Netherlands

\end{document}